\newtheorem{theorem}{Theorem}[section]
\newtheorem{lemma}[theorem]{Lemma}
\newtheorem{corollary}[theorem]{Corollary}
\theoremstyle{definition}
\theoremstyle{remark}
\newtheorem{remark}[theorem]{Remark}
\numberwithin{equation}{section}
\begin{document}
\setcounter{page}{1}

\title[Untying knots in 4D]{Untying knots in 4D and Wedderburn's theorem}

\author[Nikolaev]
{Igor ~V. ~Nikolaev$^1$}

\address{$^{1}$ Department of Mathematics and Computer Science, St.~John's University, 8000 Utopia Parkway,  
New York,  NY 11439, United States.}
\email{\textcolor[rgb]{0.00,0.00,0.84}{igor.v.nikolaev@gmail.com}
}


\subjclass[2010]{Primary 16P10; Secondary 57Q45.}

\keywords{4-dimensional manifolds, Wedderburn Theorem.}


\begin{abstract}
It is proved that the Wedderburn Theorem on  finite division rings implies 
that all knots and links in  the smooth 4-dimensional manifolds are  trivial. 
\end{abstract}

\maketitle

\section{Introduction}
Our brief note contains an algebraic proof of the otherwise known topological fact,  that 
all knots and links in the smooth 4-dimensional manifolds can be untied, i.e. are trivial. 
The novelty is a surprising  r\^ole of the Wedderburn Theorem  [Maclagan-Wedderburn 1905] \cite{Wed1}
in the 4-dimensional topology \cite{Nik1}. 

\medskip
Recall that arithmetic topology studies  a functor, $F$,   between the 3-dimensional manifolds
 and the fields of algebraic numbers [Morishita 2012] \cite{M}.  
Such a functor maps   $3$-dimensional manifolds $\mathscr{M}^3$ to 
 the algebraic  number fields $K$, so that the knots  (links, resp.)  in   $\mathscr{M}^3$ correspond  to 
the prime ideals (ideals, resp.)  in the ring of integers $O_K$.

The map  $F$ extends to the smooth 4-dimensional manifolds $\mathscr{M}^4$ and the fields of
hyper-algebraic numbers $\mathbb{K}$, i.e. fields with a non-commutative multiplication \cite{Nik1}. 
To formulate our result, denote by $O_{\mathbb{K}}$ the ring of integers of the field $\mathbb{K}$. 
A ring $R$ is called a  domain, if $R$ has no zero divisors.  The  $R$ is called simple,
if it has only trivial two-sided ideals.  Our main result is the following theorem. 
\begin{theorem}\label{thm1.1}
$O_{\mathbb{K}}$ is a simple domain. 
\end{theorem}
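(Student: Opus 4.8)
The plan is to prove the two properties in turn, invoking Wedderburn's theorem only for simplicity. The domain property I would obtain directly from the ambient structure: by definition $O_{\mathbb{K}}$ is a subring of the division ring $\mathbb{K}$, and a division ring---commutative or not---has no zero divisors. Since the absence of zero divisors is inherited by every subring, $O_{\mathbb{K}}$ is a domain, and this step uses nothing beyond the inclusion $O_{\mathbb{K}} \subseteq \mathbb{K}$.

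For simplicity I would first record the arithmetic input that makes $O_{\mathbb{K}}$ behave as a ring of integers: every nonzero two-sided ideal $I \subseteq O_{\mathbb{K}}$ is of finite index, so the residue ring $O_{\mathbb{K}}/I$ is finite. Arguing by contradiction, suppose $O_{\mathbb{K}}$ is not simple and fix a maximal proper two-sided ideal $\mathfrak{m}$; then $D := O_{\mathbb{K}}/\mathfrak{m}$ is a finite simple ring. I would then try to pin down $D$ from two sides at once. On the one hand, using that $O_{\mathbb{K}}$ is a domain I would argue that $\mathfrak{m}$ is completely prime, so that $D$ has no zero divisors; a finite ring without zero divisors is a division ring, hence $D$ is a finite division ring. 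On the other hand, since the multiplication on $\mathbb{K}$, and therefore on $O_{\mathbb{K}}$, is non-commutative, I would aim to choose $\mathfrak{m}$ so that some commutator $xy-yx$ with $x,y \in O_{\mathbb{K}}$ does not lie in $\mathfrak{m}$, making $D$ a \emph{non-commutative} finite division ring.

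These two conclusions are exactly what Wedderburn's theorem \cite{Wed1} forbids: a finite division ring is necessarily commutative. Thus a finite division ring $D$ that remains non-commutative cannot exist, the maximal two-sided ideal $\mathfrak{m}$ cannot exist, and the absence of proper nonzero two-sided ideals is precisely the simplicity of $O_{\mathbb{K}}$. Combined with the first paragraph, this shows $O_{\mathbb{K}}$ is a simple domain.

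The main obstacle is the middle step, where $D$ must be controlled from both directions simultaneously. Complete primeness---that the domain property of $O_{\mathbb{K}}$ descends to the residue ring, ruling out the alternative $D \cong M_n(\mathbb{F}_q)$ with $n \geq 2$ of the Wedderburn--Artin classification---is the genuine hurdle, since quotients of a domain need not be domains. Equally, one must verify that the commutators of $O_{\mathbb{K}}$ are not all absorbed into $\mathfrak{m}$, for otherwise $D$ would be a commutative field and Wedderburn would yield no contradiction. Securing these two points---$D$ a division ring and $D$ non-commutative---is the crux; the ambient division-ring property and Wedderburn's theorem then close the argument.
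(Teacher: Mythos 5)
Your strategy is the same as the paper's: argue by contradiction, pass to a maximal two-sided ideal $\mathfrak{m}$, observe that the quotient $D=O_{\mathbb{K}}/\mathfrak{m}$ is finite, force $D$ to be a division ring, and let Wedderburn's theorem produce the contradiction from non-commutativity. Your route to the domain property (a subring of the division ring $\mathbb{K}$ has no zero divisors) is cleaner than the paper's, which instead asserts it inside a Noetherian-ness lemma. But the two steps you yourself flag as ``the crux'' are not proved, and they are genuine gaps rather than formalities. First, you need $D$ to have no zero divisors. As you correctly observe, a quotient of a domain need not be a domain, and a maximal two-sided ideal of a noncommutative ring need not be completely prime --- the Artin--Wedderburn alternative $D\cong M_n(\mathbb{F}_q)$ with $n\ge 2$ is exactly the case you must exclude, and you give no argument that excludes it. You cannot import the paper's fix either: its Lemma 3.2 deduces that the quotient is a domain ``since $O_{\mathbb{K}}$ is a domain and $h$ is surjective,'' which is not a valid inference ($\mathbb{Z}\to\mathbb{Z}/4\mathbb{Z}$ is a surjection from a domain onto a non-domain), so the hole is present in both texts at the same spot.

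Second, you need some commutator $xy-yx$ to lie outside $\mathfrak{m}$, for otherwise $D$ is a finite field and Wedderburn yields no contradiction. You say you would ``aim to choose $\mathfrak{m}$'' with this property, but no such choice is exhibited, and it is not automatic: a noncommutative ring can have all of its simple quotients commutative (the two-sided ideal generated by all commutators can sit inside every maximal two-sided ideal). The paper's handling of this step is to argue that the quotient map $h$ is injective, which is incompatible with $\ker h=\mathfrak{m}\neq 0$ and so cannot be borrowed as a repair. A further unproved input is your opening claim that every nonzero two-sided ideal of $O_{\mathbb{K}}$ has finite index; this is the sole source of finiteness of $D$ and needs a justification (the paper appeals to an external normal-subgroup theorem for this). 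Until the complete-primeness of $\mathfrak{m}$, the non-commutativity of $D$, and the finite-index claim are actually established, the proof is a correct outline of the paper's argument with its load-bearing steps left open.
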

\begin{remark}\label{rmk1.2}
Theorem \ref{thm1.1} is false for the  algebraic integers, since the domain $O_K$
is never simple. 
\end{remark}
\begin{corollary}\label{cor1.3}
Any knot or link in $\mathscr{M}^4$ is trivial. 
\end{corollary}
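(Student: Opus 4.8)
The plan is to read off the topological conclusion from the ideal structure of $O_{\mathbb{K}}$ forced by Theorem \ref{thm1.1}, using the functor $F$ recalled in the introduction. Under $F$ the smooth manifold $\mathscr{M}^4$ is sent to the ring of integers $O_{\mathbb{K}}$, a knot in $\mathscr{M}^4$ is sent to a prime ideal of $O_{\mathbb{K}}$, and a link is sent to a two-sided ideal of $O_{\mathbb{K}}$; moreover, a knot (link, resp.) is unknotted (unlinked, resp.) exactly when its image is a trivial ideal. The first step I would carry out is therefore to list the ideals that can occur as images of $F$.

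By Theorem \ref{thm1.1} the ring $O_{\mathbb{K}}$ is a simple domain. Simplicity says that the only two-sided ideals are $(0)$ and $O_{\mathbb{K}}$, so no link can be sent to a proper nonzero ideal. Because $O_{\mathbb{K}}$ has no zero divisors, the zero ideal $(0)$ is prime, and since $O_{\mathbb{K}}$ itself is not a proper prime, $(0)$ is in fact the unique prime ideal; hence no knot can be sent to a nonzero prime ideal either. In both cases the image of $F$ is forced into the short list $\{(0),\,O_{\mathbb{K}}\}$, and by the matching of triviality in the previous paragraph the corresponding knot or link must itself be trivial. This is precisely the mechanism complementary to Remark \ref{rmk1.2}: in dimension three $O_K$ is never simple, so its nontrivial ideals are free to record genuine knotting, whereas the simplicity obtained in dimension four removes every such ideal.

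The delicate point, and the step I expect to be the main obstacle, is the claim that $F$ detects nontriviality, i.e. that a genuinely knotted or linked configuration produces a proper nonzero ideal rather than collapsing to a trivial one for spurious reasons. This amounts to the faithfulness (on isotopy classes) of the extended functor constructed in \cite{Nik1}, which is the property I would invoke rather than reprove here. Granting it, the collapse of the two-sided ideal lattice of $O_{\mathbb{K}}$ to $\{(0),\,O_{\mathbb{K}}\}$ transfers directly to a collapse of all isotopy classes of knots and links in $\mathscr{M}^4$ onto the trivial ones, which is exactly the assertion of Corollary \ref{cor1.3}.
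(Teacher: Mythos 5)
Your proposal is correct and follows essentially the same route as the paper: the correspondence under $F$ between knots (links) and prime (two-sided) ideals, combined with the simplicity of $O_{\mathbb{K}}$ from Theorem \ref{thm1.1}, forces every such ideal to be trivial and hence every knot or link to be trivial. The only difference is presentational --- the paper argues by contradiction while you argue contrapositively --- and you are right to flag that the whole argument hinges on the faithfulness of $F$ (that a non-trivial knot or link yields a non-trivial ideal), a property the paper simply asserts and defers to \cite{Nik1}.
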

\begin{proof}
If  $\mathscr{K}\subset \mathscr{M}^4$  ($\mathscr{L}\subset \mathscr{M}^4$, resp.)  
is a non-trivial knot (link, resp.),  then $F(\mathscr{K})$ ($F(\mathscr{L})$, resp.) is a 
non-trivial two-sided prime ideal (two-sided ideal, resp.) in  $O_{\mathbb{K}}$. 
The latter contradicts \ref{thm1.1},  since  $O_{\mathbb{K}}$ is a simple ring. 
\end{proof}

\bigskip
The paper is organized as follows. Section 2 contains a brief review 
of the preliminary results. Theorem \ref{thm1.1} is proved in Section 3.

\section{Preliminaries}
\subsection{Arithmetic topology}
The arithmetic topology studies an interplay between
3-dimensional manifolds and  number fields 
 [Morishita 2012] \cite{M}.  
 Let $\mathfrak{M}^3$ be a category of  closed 3-dimensional manifolds,
such that  the arrows of $\mathfrak{M}^3$ are  homeomorphisms  between the  manifolds.
Likewise, let $\mathbf{K}$ be a category of the algebraic number fields,  where 
the arrows of $\mathbf{K}$ are  isomorphisms between such fields.
 Let  $\mathscr{M}^3\in \mathfrak{M}^3$ be a 3-manifold,  let $S^3\in \mathfrak{M}^3$ be the 3-sphere
 and let $O_K$ be the ring of integers of  $K\in\mathbf{K}$.  An exact relation between 3-manifolds and 
 number fields can be described  as follows. 
\begin{theorem}\label{thm2.1}
The exists a covariant functor $F: \mathfrak{M}^3\to \mathbf{K}$, such that:

\medskip
(i) $F(S^3)=\mathbf{Z}$; 

\smallskip
(ii)  each  ideal $I\subseteq O_K=F(\mathscr{M}^3)$ corresponds to 
a link $\mathscr{L}\subset\mathscr{M}^3$; 

\smallskip
(iii)  each  prime ideal $I\subseteq O_K=F(\mathscr{M}^3)$ corresponds to
a knot  $\mathscr{K}\subset\mathscr{M}^3$. 
\end{theorem}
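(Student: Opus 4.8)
The plan is to build the functor $F$ in stages: first pin down its value on the $3$-sphere, then extend it to an arbitrary closed oriented $3$-manifold, and finally verify the dictionary between the geometric objects (knots and links) and the arithmetic ones (prime ideals and ideals). The guiding principle throughout is the Mazur--Kapranov--Reznikov analogy: $\operatorname{Spec} O_K$ should behave like a closed oriented $3$-manifold because, by Artin--Verdier duality, its \'etale cohomology has the formal properties of the cohomology of a $3$-manifold (\'etale cohomological dimension $3$ once the archimedean places are accounted for), while a single residue scheme $\operatorname{Spec} O_K/\mathfrak{p}\hookrightarrow\operatorname{Spec} O_K$ behaves like a ``circle'', its punctured \'etale fundamental group being the procyclic $\widehat{\mathbb{Z}}$, exactly the profinite completion of the peripheral meridian in a knot complement.

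First I would fix the base case. The $3$-sphere is the unique simply connected closed $3$-manifold, so $\pi_1(S^3)=1$; on the arithmetic side, Minkowski's theorem says $\mathbb{Q}$ admits no nontrivial unramified extension, whence $\pi_1^{\mathrm{\acute et}}(\operatorname{Spec}\mathbf{Z})=1$. Matching these two ``trivial'' fundamental groups forces $F(S^3)$ to be the ring of integers of $\mathbb{Q}$, that is $\mathbf{Z}$, which is clause (i).

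Next I would define $F$ on a general object. The topological input is the Hilden--Montesinos theorem: every closed orientable $3$-manifold $\mathscr{M}^3$ arises as a $3$-fold cover of $S^3$ branched over some knot or link $\mathscr{B}\subset S^3$. Under the analogy, such a branched cover corresponds to a finite extension $K/\mathbb{Q}$ ramified precisely over the primes assigned to the components of $\mathscr{B}$, so I would set $F(\mathscr{M}^3)=O_K$ for this $K$. With $F$ so defined, clauses (ii) and (iii) become the assertion that, under the cover $\operatorname{Spec} O_K\to\operatorname{Spec}\mathbf{Z}$, the decomposition and inertia data at a prime $\mathfrak{p}\subset O_K$ reproduce the peripheral structure of the corresponding knot $\mathscr{K}\subset\mathscr{M}^3$: the inertia generator plays the role of a meridian and the Frobenius that of a longitude. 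A single prime then matches a connected knot, while a general nonprime ideal factors as a product of primes and matches a disjoint union of knots, i.e. a link $\mathscr{L}$ --- this is precisely (iii) and (ii). Functoriality on arrows then reduces to checking that a homeomorphism $\mathscr{M}^3\to\mathscr{N}^3$ transports the branch locus together with its covering monodromy to an isomorphism of the associated number fields.

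The hard part --- and the step where I expect the real difficulty to lie --- is well-definedness, and hence functoriality. A $3$-manifold has many inequivalent presentations as a branched cover: the branch link and the covering monodromy are far from unique (they are related by Montesinos and Birman--Hilden moves), and there is no a priori canonical number field attached to such data. To make $F$ a genuine functor rather than a heuristic dictionary, one must show that the isomorphism class of $K$ depends only on the homeomorphism type of $\mathscr{M}^3$. I would attempt this by characterizing $K$ intrinsically through a profinite invariant --- identifying $\pi_1^{\mathrm{\acute et}}$ of the punctured arithmetic scheme with the profinite completion of $\pi_1(\mathscr{M}^3)$ --- so that the field is recovered from the manifold without reference to the chosen branched cover. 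This is exactly the point at which the classical arithmetic-topology correspondence is ordinarily treated as an analogy rather than a theorem, so I would expect this verification to be the crux of the entire argument.
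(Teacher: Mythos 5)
The paper contains no proof of Theorem \ref{thm2.1} at all: it is quoted as background and attributed to Morishita's book, so there is no internal argument to compare yours against. Judged on its own terms, your proposal is an accurate exposition of the Mazur--Kapranov--Reznikov dictionary (Minkowski's theorem matching $\pi_1^{\mathrm{\acute{e}t}}(\operatorname{Spec}\mathbf{Z})=1$ with $\pi_1(S^3)=1$, Hilden--Montesinos branched covers matching ramified extensions, inertia and Frobenius matching meridian and longitude), but it is not a proof, and you say so yourself in your last paragraph. The step you flag --- showing that the isomorphism class of $K$ depends only on the homeomorphism type of $\mathscr{M}^3$, independently of the chosen branched-cover presentation --- is not a deferred verification; it is the entire content of the theorem, and no argument for it is given. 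A statement whose proof ends with ``this is exactly the point at which the correspondence is ordinarily treated as an analogy rather than a theorem'' has not been proved.

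Two concrete reasons the gap cannot be closed along the route you sketch. First, the intrinsic characterization via $\pi_1^{\mathrm{\acute{e}t}}$ compares a profinite group with a discrete one, so at best you recover the profinite completion of $\pi_1(\mathscr{M}^3)$; closed $3$-manifolds are not determined by these completions, and a number field is not determined by an abstract profinite group without its decomposition data, so neither side pins the other down. Second, clause (ii) asks that \emph{every} ideal $I\subseteq O_K$ correspond to a link, but an arbitrary ideal factors with repeated prime factors, so ``disjoint union of the knots attached to the prime divisors'' does not literally account for all ideals; your dictionary only addresses squarefree ones. In fairness, the paper itself supplies nothing where you at least identify the obstruction --- but the obstruction stands, and the statement remains unproved by your argument.
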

Denote by  $\mathfrak{M}^4$ a category of  all smooth  4-dimensional manifolds $\mathscr{M}^4$,
such that  the arrows of $\mathfrak{M}^4$ are  homeomorphisms  between the  manifolds.
Denote by  $\mathfrak{K}$ a category of the hyper-algebraic number fields $\mathbb{K}$, such that
the arrows of $\mathfrak{K}$ are  isomorphisms between the fields.
Theorem \ref{thm2.1} extends to 4-manifolds as follows. 
\begin{theorem}\label{thm2.2}
{\bf (\cite[Theorem 1.1]{Nik1})}
The exists a covariant functor $F: \mathfrak{M}^4\to\mathfrak{K}$,
such that  the 4-manifolds $\mathscr{M}^4_1, \mathscr{M}^4_2\in \mathfrak{M}^4$ are homeomorphic
if and only if   the hyper-algebraic number fields
$F(\mathscr{M}^4_1), F(\mathscr{M}^4_2)\in  \mathfrak{K}$ are isomorphic. 
\end{theorem}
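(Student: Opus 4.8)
The plan is to build the functor $F$ through an intermediate noncommutative-geometric invariant, in direct analogy with the three-dimensional construction underlying Theorem \ref{thm2.1}. First I would attach to each smooth $\mathscr{M}^4\in\mathfrak{M}^4$ a canonical operator algebra $\mathcal{A}(\mathscr{M}^4)$ --- for instance the crossed-product $C^*$-algebra of a natural flow or measured foliation carried by a handle (Kirby) decomposition of the manifold. The noncommutativity of $\mathcal{A}(\mathscr{M}^4)$, which is forced by the higher dimension and is absent in the commutative $3$-dimensional picture, is what produces a genuinely noncommutative multiplication on the target. From the ordered $K$-theory (dimension group, with its positive cone and canonical trace) of $\mathcal{A}(\mathscr{M}^4)$ I would then extract the hyper-algebraic number field $\mathbb{K}=F(\mathscr{M}^4)$, taking $O_{\mathbb{K}}$ to be the integral subring singled out by the positive cone, so that $F$ lands in $\mathfrak{K}$.

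Functoriality and the easy implication come next. Because the assignment $\mathscr{M}^4\mapsto\mathcal{A}(\mathscr{M}^4)$ is canonical, a homeomorphism $\mathscr{M}^4_1\to\mathscr{M}^4_2$ transports the defining flow or foliation and hence induces an isomorphism $\mathcal{A}(\mathscr{M}^4_1)\cong\mathcal{A}(\mathscr{M}^4_2)$; applying the $K$-theoretic extraction yields an isomorphism $F(\mathscr{M}^4_1)\cong F(\mathscr{M}^4_2)$ in $\mathfrak{K}$. This simultaneously verifies that $F$ is a well-defined covariant functor and establishes the ``only if'' direction of the stated equivalence.

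The substantive content is the converse: that the hyper-algebraic field is a \emph{complete} invariant, so that an isomorphism $F(\mathscr{M}^4_1)\cong F(\mathscr{M}^4_2)$ forces a homeomorphism. My plan is to factor this through two rigidity statements. The first is an Elliott-type classification theorem asserting that the isomorphism class of the scaled ordered $K$-theory --- equivalently of $\mathbb{K}$ together with $O_{\mathbb{K}}$ --- recovers the algebra $\mathcal{A}(\mathscr{M}^4)$ up to isomorphism. The second is a reconstruction statement showing that $\mathcal{A}(\mathscr{M}^4)$ in turn determines $\mathscr{M}^4$ up to homeomorphism. For the reconstruction I would match the $K$-theoretic data against the classical invariants governing the homeomorphism type of a closed smooth $4$-manifold, namely the intersection form on $H_2(\mathscr{M}^4;\mathbf{Z})$ together with the Kirby--Siebenmann class, invoking Freedman's theorem in the simply-connected case to pass from this algebraic data back to a homeomorphism.

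The hard part will be precisely this calibration of the invariant. The construction must be tuned so that $\mathbb{K}$ remembers exactly the homeomorphism type and nothing finer: it must be insensitive to the choice of smooth structure, so that exotic pairs (which are homeomorphic) are sent to isomorphic fields, yet sharp enough to separate non-homeomorphic manifolds. Since the smooth classification of $4$-manifolds is wide open while the topological classification is comparatively rigid, the delicate step is to verify that the extraction from $K$-theory collapses the smooth data down to the Freedman-level invariants --- neither discarding the intersection form and Kirby--Siebenmann class nor retaining any exotic information. Establishing this exact calibration, rather than the formal functoriality, is where the real work lies.
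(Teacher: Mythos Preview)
The paper does not contain a proof of Theorem~\ref{thm2.2}: it is stated in the Preliminaries section and attributed to \cite[Theorem~1.1]{Nik1}, with no argument supplied here. There is therefore nothing in this paper to compare your proposal against.

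Taken on its own terms, your outline is a programme rather than a proof, and at least one step fails in the generality of the statement. Your reconstruction step passes from the algebraic data back to a homeomorphism by ``invoking Freedman's theorem in the simply-connected case,'' but $\mathfrak{M}^4$ is the category of \emph{all} closed smooth $4$-manifolds; for non-simply-connected $4$-manifolds there is no Freedman-type topological classification available, so the converse direction cannot be closed along this route in the stated generality. More basically, the assignment $\mathscr{M}^4\mapsto\mathcal{A}(\mathscr{M}^4)$ is never actually specified: ``a natural flow or measured foliation carried by a handle (Kirby) decomposition'' does not single out a canonical object, since handle decompositions are highly non-unique, so even the forward functoriality is not established. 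The Elliott-type classification you invoke is likewise only known for restricted classes of $C^*$-algebras, and you give no argument that the algebras arising here belong to such a class. As you yourself acknowledge, the ``exact calibration'' of the invariant is where the real work lies, and none of that work is carried out in the proposal.
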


\subsection{Wedderburn Theorem}
Roughly speaking, Wedderburn's Theorem says that finite non-commutative fields 
cannot exist  [Maclagan-Wedderburn 1905] \cite{Wed1}.  Namely, denote by $\mathscr{D}$
a division ring.  Let   $\mathbb{F}_q$ be a finite field for some $q=p^r$, where $p$ is a prime
and $r\ge 1$ is an integer number.   
\begin{theorem}\label{thm2.3}
{\bf (Wedderburn Theorem)}
If $|\mathscr{D}|<\infty$ and $\mathscr{D}$ is finite dimensional over a division ring, 
then $\mathscr{D}\cong \mathbb{F}_q$ for some $q=p^r$.
\end{theorem}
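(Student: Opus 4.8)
The plan is to show that the hypotheses force $\mathscr{D}$ to coincide with its own center, and then invoke the classification of finite fields. Since a finite division ring is automatically finite dimensional over its center, the assumptions amount to saying that $\mathscr{D}$ is a finite division ring. First I would set $Z=Z(\mathscr{D})$, the center of $\mathscr{D}$, which is a finite commutative division ring, hence a finite field of order $q=p^r$ for some prime $p$. Since $\mathscr{D}$ is a vector space over $Z$, one gets $|\mathscr{D}|=q^n$ with $n=\dim_Z\mathscr{D}$, and it suffices to prove that $n=1$: in that case $\mathscr{D}=Z$, and the uniqueness of finite fields yields $\mathscr{D}\cong\mathbb{F}_q$ with $q=p^r$, as claimed.

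The main tool is the class equation for the multiplicative group $\mathscr{D}^\ast=\mathscr{D}\setminus\{0\}$. For each $x\in\mathscr{D}$ the centralizer $C(x)=\{y\in\mathscr{D}:xy=yx\}$ is again a division ring containing $Z$, so $|C(x)|=q^{d(x)}$ for some integer $d(x)$; viewing $\mathscr{D}$ as a left vector space over $C(x)$ forces $d(x)\mid n$, and $d(x)=n$ holds exactly when $x\in Z$. Writing the class equation, in which the center contributes $|Z^\ast|=q-1$ and each noncentral conjugacy class contributes the index $(q^n-1)/(q^{d}-1)$ with $d=d(x)$ a proper divisor of $n$, gives
\[
q^n-1 \;=\; (q-1) \;+\; \sum_{x}\frac{q^n-1}{q^{d(x)}-1},
\]
the sum running over representatives of the noncentral classes.

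The crux is a cyclotomic divisibility argument. Let $\Phi_n$ denote the $n$-th cyclotomic polynomial, so that $t^n-1=\prod_{d\mid n}\Phi_d(t)$ in $\mathbf{Z}[t]$. For every proper divisor $d$ of $n$ one checks that $\Phi_n(t)$ divides $(t^n-1)/(t^d-1)$ in $\mathbf{Z}[t]$, so the integer $\Phi_n(q)$ divides $q^n-1$ as well as every summand on the right-hand side of the class equation. Substituting into the displayed identity then forces $\Phi_n(q)\mid q-1$. The hard part, and the only place the assumption $n>1$ enters, is the elementary geometric estimate $|\Phi_n(q)|>q-1$: writing $\Phi_n(q)=\prod_\zeta(q-\zeta)$ as a product over the primitive $n$-th roots of unity, each factor satisfies $|q-\zeta|>q-1$ because $\zeta\neq 1$ lies on the unit circle and $q\geq 2$, so the product exceeds $q-1$ and cannot divide it. This contradiction forces $n=1$, whence $\mathscr{D}=Z\cong\mathbb{F}_q$ with $q=p^r$.
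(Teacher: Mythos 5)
Your argument is correct. Note, however, that the paper itself offers no proof of this statement: Theorem \ref{thm2.3} is quoted as a known result with a citation to Maclagan--Wedderburn's 1905 paper, so there is nothing internal to compare against. What you have written is the standard Witt proof: reduce to showing $\mathscr{D}$ equals its center $Z\cong\mathbb{F}_q$, write the class equation for $\mathscr{D}^\ast$ with central contribution $q-1$ and noncentral contributions $(q^n-1)/(q^d-1)$ for proper divisors $d\mid n$, observe that $\Phi_n(q)$ divides both $q^n-1$ and each noncentral term (since $\Phi_n(t)$ divides $(t^n-1)/(t^d-1)$ in $\mathbf{Z}[t]$ whenever $d$ is a proper divisor of $n$), conclude $\Phi_n(q)\mid q-1$, and rule this out for $n>1$ via the estimate $|q-\zeta|>q-1$ for each primitive $n$-th root of unity $\zeta\neq 1$. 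All the steps check out, including the divisibility $d(x)\mid n$ obtained by viewing $\mathscr{D}$ as a vector space over the centralizer $C(x)$. Your opening remark correctly disposes of the paper's slightly redundant hypothesis that $\mathscr{D}$ be finite dimensional over a division ring: finiteness of $\mathscr{D}$ already makes it finite dimensional over its center, so the content of the theorem is exactly Wedderburn's little theorem, which your proof establishes.
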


\bigskip
We shall use  \ref{thm2.3} along with a classification of 
 simple rings due to Artin and Wedderburn.  Recall that a ring $R$
is called simple, if $R$ has only trivial two-sided ideals. 
By $M_n(\mathscr{D})$ we understand the ring of $n$ by $n$ matrices  over $\mathscr{D}$. 
\begin{theorem}\label{thm2.4}
{\bf (Artin-Wedderburn)}
If $R$ is a simple ring, then $R\cong M_n(\mathscr{D})$ for a division ring $\mathscr{D}$ 
and an integer $n\ge 1$. 
\end{theorem}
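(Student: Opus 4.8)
The plan is to prove Theorem \ref{thm2.4} by the classical route through minimal left ideals and Schur's Lemma, viewing $R$ as a module over itself. Throughout I take $R$ to be left Artinian with identity, the standing hypothesis under which the Artin--Wedderburn classification is valid; without a chain condition a simple ring need not be a full matrix ring. The strategy is to show that the left regular module ${}_R R$ is a finite direct sum of copies of a single simple module $L$, and then to identify $R$ with the endomorphism ring of that module.

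First I would produce a minimal left ideal $L \subseteq R$. The descending chain condition on left ideals guarantees that the family of nonzero left ideals has a minimal member $L$, and by minimality $L$ is a simple left $R$-module. Next I would show that ${}_R R$ is semisimple with a single isotypic component. Let $S$ be the sum of all minimal left ideals of $R$ (the left socle). For any $r \in R$ the map $x \mapsto xr$ is a left $R$-module homomorphism, so it carries each minimal left ideal either to $0$ or to an isomorphic minimal left ideal; hence $Sr \subseteq S$ and $S$ is a two-sided ideal. Since $R$ is simple and $S \neq 0$, we get $S = R$, so ${}_R R$ is a sum of simple left ideals and is therefore semisimple. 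A second use of simplicity forces all simple summands to be isomorphic to $L$: if $L'$ were a minimal left ideal with $L' \not\cong L$, the sum of all minimal left ideals isomorphic to $L$ would be a proper nonzero two-sided ideal, a contradiction. Thus ${}_R R \cong L^{\oplus n}$, the number of summands being finite because ${}_R R$ is cyclic, generated by $1$.

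Finally I would compute endomorphism rings. By Schur's Lemma $\mathscr{D}_0 := \operatorname{End}_R(L)$ is a division ring, and
\[
R^{\mathrm{op}} \cong \operatorname{End}_R({}_R R) \cong \operatorname{End}_R\bigl(L^{\oplus n}\bigr) \cong M_n\bigl(\mathscr{D}_0\bigr),
\]
where the first isomorphism sends $r$ to right multiplication by $r$. Passing to opposite rings and setting $\mathscr{D} := \mathscr{D}_0^{\mathrm{op}}$, which is again a division ring, and using $M_n(\mathscr{D}_0)^{\mathrm{op}} \cong M_n(\mathscr{D}_0^{\mathrm{op}})$ via transposition, we obtain $R \cong M_n(\mathscr{D})$, as claimed.

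The main obstacle is the middle step: showing that the regular module is not merely semisimple but isotypic, i.e. that all its simple summands are mutually isomorphic. This is precisely where the rigidity of the two-sided ideal structure forced by simplicity is decisive, and it is what separates a simple ring from a general semisimple one. The other delicate point is keeping track of the opposite-ring conventions in the concluding endomorphism computation, though that bookkeeping is routine.
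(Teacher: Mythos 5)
Your argument is correct, but note that the paper offers no proof of this statement at all: Theorem \ref{thm2.4} is quoted as the classical Artin--Wedderburn theorem and used as a black box, so there is nothing internal to compare your proof against. What you give is the standard textbook proof: extract a minimal left ideal $L$ via the descending chain condition, show the left socle is a nonzero two-sided ideal and hence all of $R$, use simplicity again to force ${}_R R$ to be $L$-isotypic, count summands by cyclicity of ${}_R R$, and finish with Schur's Lemma and the identification $R^{\mathrm{op}} \cong \operatorname{End}_R({}_R R) \cong M_n(\operatorname{End}_R(L))$ followed by the transpose isomorphism on opposite matrix rings. Each step is sound. The one substantive point is your opening caveat, and it deserves emphasis: you correctly add the hypothesis that $R$ is left Artinian with identity, which the paper's statement omits. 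As literally stated in the paper the theorem is false --- the first Weyl algebra over a field of characteristic zero is a simple ring that is not isomorphic to any $M_n(\mathscr{D})$, since $n=1$ would make it a division ring and $n\ge 2$ would introduce zero divisors into a domain. This omission matters downstream, because the paper applies Theorem \ref{thm2.4} to the quotient $R=O_{\mathbb{K}}/\mathbf{I}_{\max}$ without verifying any chain condition on that ring; the finiteness asserted in Remark \ref{rmk3.3} would supply it, but only insofar as that remark's own justification holds. So your proof is correct and, in supplying the missing hypothesis, is more careful than the statement it proves.
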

\begin{remark}\label{rmk2.5}
The ring $M_n(\mathscr{D})$ is a domain if and only if $n=1$. 
For instance, if $n=2$,  then the matrices 
$\left(\begin{smallmatrix} 1 & 0\cr 0 & 0 \end{smallmatrix}\right)$ and 
$\left(\begin{smallmatrix} 0 & 0\cr 0 & 1 \end{smallmatrix}\right)$
are  zero divisors in the ring $M_2(\mathscr{D})$. 
\end{remark}

\section{Proof of theorem \ref{thm1.1}}
Theorem \ref{thm1.1} will be proved by contradiction. Namely, we show that 
existence of  a non-trivial two-sided ideal in  $O_{\mathbb{K}}$ contradicts
 \ref{thm2.3}.   To begin, let us prove the following 
 lemma.  
\begin{lemma}\label{lem3.1}
$O_{\mathbb{K}}$ is a non-commutative Noetherian domain. 
\end{lemma}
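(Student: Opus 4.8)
The plan is to verify the three asserted properties of $O_{\mathbb{K}}$ separately, dispatching the domain and non-commutativity claims quickly and reserving the bulk of the effort for the Noetherian condition. I would first handle the domain property. By definition $\mathbb{K}$ is a field with non-commutative multiplication, i.e. a division ring, so every nonzero element is invertible; consequently $\mathbb{K}$ has no zero divisors, since $ab=0$ with $a\neq 0$ forces $b=a^{-1}(ab)=0$. As $O_{\mathbb{K}}$ is by construction a subring of $\mathbb{K}$, it inherits the absence of zero divisors, and hence $O_{\mathbb{K}}$ is a domain.

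Next I would establish that $O_{\mathbb{K}}$ is Noetherian, which is the heart of the lemma. The guiding analogy is the classical case, where $O_K$ is a free $\mathbb{Z}$-module of rank $[K:\mathbb{Q}]$ and is therefore Noetherian. Here I would argue that $O_{\mathbb{K}}$ is module-finite over a commutative Noetherian base. Concretely, let $Z=Z(\mathbb{K})$ denote the center of $\mathbb{K}$, which is a commutative field, and let $O_Z$ be its ring of integers; by the classical theory $O_Z$ is a Noetherian domain. I would then show that $O_{\mathbb{K}}$ is a finitely generated $O_Z$-module, which is precisely the statement that $O_{\mathbb{K}}$ is an order in the finite-dimensional division algebra $\mathbb{K}$ over $Z$. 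Granting module-finiteness over the central Noetherian subring $O_Z$, the standard transfer result, that a ring which is a finitely generated module over a central commutative Noetherian subring is both left and right Noetherian, yields that $O_{\mathbb{K}}$ is Noetherian. Non-commutativity then follows at once: the order $O_{\mathbb{K}}$ contains an $O_Z$-spanning set of the non-commutative ring $\mathbb{K}$, so some two of its elements fail to commute, and $O_{\mathbb{K}}$ cannot be commutative.

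The hard part will be the module-finiteness claim, and it is here that the precise definition of $O_{\mathbb{K}}$ from \cite{Nik1} must be invoked. One must verify that $\mathbb{K}$ is finite-dimensional over its center $Z$, so that the Wedderburn-type input of Theorem \ref{thm2.3} is even applicable downstream, and that the integral closure $O_{\mathbb{K}}$ sits inside a finitely generated $O_Z$-module. In the theory of orders this follows once $\mathbb{K}$ carries a nondegenerate reduced trace form, which forces the integral elements to lie in the dual lattice of any chosen $O_Z$-basis. I would therefore fix an $O_Z$-basis of $\mathbb{K}$, introduce the reduced trace pairing, and sandwich $O_{\mathbb{K}}$ between two finitely generated $O_Z$-modules, so that Noetherianity of $O_Z$ propagates upward. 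Assembling the three parts completes the proof.
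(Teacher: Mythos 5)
Your proof is a genuinely different route from the paper's. The paper's own argument addresses only the Noetherian claim (the domain and non-commutativity properties are left implicit): it presents $O_{\mathbb{K}}$ as generated by zeroes of a non-commutative polynomial with coefficients in $O_{\mathbb{L}}$ for a tower of finite extensions $\mathbb{H}\subset\cdots\subset\mathbb{L}\subset\mathbb{K}$, invokes Amitsur's non-commutative Hilbert Basis Theorem to transfer Noetherianity up each step of the tower, and anchors the induction at the Hurwitz quaternions $O_{\mathbb{H}}$. You instead use the classical theory of orders: sandwich $O_{\mathbb{K}}$ between two finitely generated modules over $O_Z$, $Z$ the center, via the reduced trace pairing, and conclude Noetherianity from module-finiteness over a central commutative Noetherian subring. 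Your route is the standard one (Reiner-style) and is arguably more robust, since it does not depend on a particular polynomial presentation of $O_{\mathbb{K}}$; it also has the merit of actually proving the domain and non-commutativity clauses, which the paper does not. What it costs you is two hypotheses that must be imported from \cite{Nik1}: that $\mathbb{K}$ is finite-dimensional over its center (the paper's tower of finite extensions starting from $\mathbb{H}$ would supply this, but you should say so), and that $O_{\mathbb{K}}$ is genuinely an order, i.e.\ a ring that is a full $O_Z$-lattice --- in the non-commutative setting the set of \emph{all} integral elements is typically not closed under addition, so ``the ring of integers'' must be read as a (maximal) order for your dual-lattice sandwich to apply. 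You flag both dependencies explicitly, so I regard this as a legitimate alternative proof with clearly labeled external inputs rather than a gap.
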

\begin{proof}
Recall that $O_{\mathbb{K}}$ is generated by  the zeroes of a non-commutative polynomial
$\mathscr{P}(x):=\sum_i a_ixb_ixc_ix\dots e_ixl_i$,
where $a_i,b_i,c_i\dots, e_i, l_i\in  O_{\mathbb{L}}$ and $\mathbb{K}$ is a finite dimensional extension of  $\mathbb{L}$. 
By the Hilbert  Basis Theorem for non-commutative rings [Amitsur 1970] \cite{Ami1}, if $O_{\mathbb{L}}$ is Noetherian,  i.e. any  
ascending chain of the two-sided ideals of $O_{\mathbb{L}}$ 
stabilizes,  then the ring $O_{\mathbb{K}}$  is also  Noetherian. Repeating the construction, one
arrives at a finite dimensional  extension 
 $\mathbb{H}\subset\mathbb{K}$, where $\mathbb{H}$ is the field of quaternions.  
The ring of the Hurwitz quaternions  $O_{\mathbb{H}}$ is known to be Noetherian. 
Thus $O_{\mathbb{K}}$ is a Noetherian ring. Lemma \ref{lem3.1} is proved. 
\end{proof}

\bigskip
Returning to the proof of theorem \ref{thm1.1}, let us assume to the contrary, that $\mathbf{I}$
is a non-trivial two-sided ideal of $O_{\mathbb{K}}$. By lemma \ref{lem3.1},  there exists the maximal 
two-sided ideal $\mathbf{I}_{\max}$, such that 
\begin{equation}\label{eq3.2}
\mathbf{I}\subseteq\mathbf{I}_{\max}\subset O_{\mathbb{K}}. 
\end{equation}

\bigskip
\begin{lemma}\label{lem3.2}
 The ring $R:= O_{\mathbb{K}}/\mathbf{I}_{\max}$ is a simple domain.
\end{lemma}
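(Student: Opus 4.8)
The plan is to treat the two assertions separately: simplicity is routine, while the domain property is the genuine difficulty. For simplicity, I would invoke the correspondence theorem for two-sided ideals. The two-sided ideals of $R = O_{\mathbb{K}}/\mathbf{I}_{\max}$ stand in inclusion-preserving bijection with the two-sided ideals of $O_{\mathbb{K}}$ lying between $\mathbf{I}_{\max}$ and $O_{\mathbb{K}}$. By the maximality of $\mathbf{I}_{\max}$ --- which exists because $O_{\mathbb{K}}$ is Noetherian by Lemma \ref{lem3.1}, so every proper two-sided ideal sits inside a maximal one --- the only such ideals are $\mathbf{I}_{\max}$ and $O_{\mathbb{K}}$ themselves. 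These map to the zero ideal and to all of $R$, so $R$ has no nontrivial two-sided ideals and is therefore simple.

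For the domain property I would aim to prove that $\mathbf{I}_{\max}$ is completely prime, i.e. that $ab \in \mathbf{I}_{\max}$ forces $a \in \mathbf{I}_{\max}$ or $b \in \mathbf{I}_{\max}$; this is precisely the statement that $R$ carries no zero divisors. As a first step I would record that maximality already makes $R$ a prime ring: if $A, B$ are two-sided ideals with $AB \subseteq \mathbf{I}_{\max}$ and $A \not\subseteq \mathbf{I}_{\max}$, then $A + \mathbf{I}_{\max} = O_{\mathbb{K}}$ and hence $B \subseteq \mathbf{I}_{\max}$. The substantive step is then to upgrade this two-sided primeness to complete primeness, and here I would try to transport the absence of zero divisors from $O_{\mathbb{K}}$ (Lemma \ref{lem3.1}) up to the quotient, showing that a hypothetical pair of nonzero elements of $R$ with vanishing product cannot be lifted compatibly to $O_{\mathbb{K}}$.

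The main obstacle is exactly this upgrade, and I expect it to be the crux of the whole argument. Simplicity by itself does not deliver a domain: by Remark \ref{rmk2.5} the simple rings $M_n(\mathscr{D})$ with $n \ge 2$ are full of zero divisors, so no formal consequence of simplicity can yield the domain conclusion --- it must rest on a specific structural feature of $O_{\mathbb{K}}$. Concretely, in the Artin--Wedderburn description $R \cong M_n(\mathscr{D})$ of Theorem \ref{thm2.4} the task reduces to excluding the matrix case $n \ge 2$, equivalently to showing that passing from the domain $O_{\mathbb{K}}$ to its simple quotient cannot manufacture a pair of orthogonal idempotents. I would attempt this by arguing that $R$, being a simple Noetherian prime ring, admits a simple Artinian ring of quotients into which it embeds, and then forcing $n=1$ there; proving that $n = 1$ is the step I anticipate will be hardest, since it is precisely where the claim either stands or falls and cannot be reached by soft general principles alone.
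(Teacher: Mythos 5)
Your treatment of simplicity is correct and coincides with the paper's: the existence of $\mathbf{I}_{\max}$ follows from the Noetherian property of Lemma \ref{lem3.1}, and maximality plus the correspondence theorem for two-sided ideals gives simplicity of the quotient at once. No issue there.

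The domain half is where your proposal stops short of a proof, and you have correctly located the crux. You reduce the claim to showing that $\mathbf{I}_{\max}$ is completely prime (equivalently, to excluding $n\ge 2$ in an Artin--Wedderburn or Goldie picture), you observe that two-sided primeness comes for free from maximality but that the upgrade to complete primeness does not, and you then explicitly defer that upgrade as the hardest step without supplying it. That is a genuine gap: as written, your argument proves that $R$ is simple and prime, not that it is a domain. You should know, however, that the paper's own proof of this half consists of the single claim that $R$ is a domain ``since $O_{\mathbb{K}}$ is a domain and the homomorphism $h\colon O_{\mathbb{K}}\to R$ is surjective.'' That implication is false: a surjective image of a domain need not be a domain ($\mathbb{Z}\to\mathbb{Z}/6\mathbb{Z}$ is the standard counterexample), and the paper's own Remark \ref{rmk2.5} exhibits simple rings $M_n(\mathscr{D})$, $n\ge 2$, with zero divisors, so simplicity cannot rescue the inference either. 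Your diagnosis --- that the domain property must rest on some specific structural feature of $O_{\mathbb{K}}$ and $\mathbf{I}_{\max}$ and cannot be reached by soft general principles --- is therefore more accurate than the paper's argument; but neither your proposal nor the paper actually supplies the missing input, so the lemma remains unproved on both routes.
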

\begin{proof}
The ring $R$ is simple, since $\mathbf{I}_{\max}$ is the maximal two-sided ideal of $O_{\mathbb{K}}$. 
The ring $R$ is a domain, since $O_{\mathbb{K}}$ is a domain and the homomorphism 
\begin{equation}\label{eq3.3}
h: O_{\mathbb{K}}\to R
\end{equation}
is surjective. 
\end{proof}
\begin{remark}\label{rmk3.3}
It follows  from $R\cong O_{\mathbb{K}}/\mathbf{I}_{\max}$,  that $|R|<\infty$.  Indeed, 
any non-trivial  subgroup of the abelian group $(O_{\mathbb{K}}, +)$ 
has finite index by the Margulis normal subgroup theorem.  In particular, the subgroup  $(\mathbf{I}_{\max}, +)$ 
has  finite index in   $(O_{\mathbb{K}}, +)$.  
\end{remark}

\bigskip
To finish the proof of theorem \ref{thm1.1},  we write 
\begin{equation}\label{eq3.4}
R\cong  M_n(\mathscr{D}),
\end{equation}
where $\mathscr{D}$ is a division ring,  see  Theorem \ref{thm2.4}. 
Since $R$ is a domain, we conclude that $n=1$ in formula (\ref{eq3.4}), see
remark \ref{rmk2.5}. Thus 
\begin{equation}\label{eq3.5}
R\cong  \mathscr{D}. 
\end{equation}

\bigskip
But remark \ref{rmk3.3} says that $|R|<\infty$  and by the Wedderburn Theorem one gets $R\cong  
 \mathbb{F}_q$ for some $q=p^r$. In particular, the homomorphism (\ref{eq3.3}) implies that 
the ring $O_{\mathbb{K}}$ is  commutative.
 Indeed, since $R$ is a commutative ring,  one gets $h(xy-yx)=h(x)h(y)-h(y)h(x)=h(x)h(y)-h(x)h(y)=0,$
 where $0$ is the neutral element of $R$ .
 In other words, the element $xy-yx$ belongs  to the kernel of $h$,  which is a two-sided ideal $I_h\subset
 O_{\mathbb{K}}$.  If $h$ is not injective, then $I_h$ is non-trivial  and taking  the multiplicative identity 
 $1\in I_h$ we obtain a contradiction  $h(1)=0$.  Thus $h$ is injective and $xy=yx$ for all $x,y\in O_{\mathbb{K}}$,
 i.e.  $O_{\mathbb{K}}$ is  a  commutative ring. 
  On the other hand,  the ring $O_{\mathbb{K}}$ cannot be commutative
 by an assumption of theorem \ref{thm1.1}. 
 The obtained contradiction completes the proof of theorem \ref{thm1.1}.

\bibliographystyle{amsplain}


\end{document}